\newtheorem{thm}{Theorem}
\newtheorem{cor}{Corollary}
\newtheorem{rmk}{Remark}
\newtheorem{lem}{Lemma}
\newcommand{\C}{\mathbb C}
\newcommand{\Q}{\mathbb Q}
\newcommand{\Z}{\mathbb Z}
\newcommand{\N}{\mathbb N}
\newcommand{\R}{\mathbb R}
\begin{document}

\title{Sign changes of a product of Dirichlet characters and Fourier coefficients of Hecke eigenforms}


\titlerunning{Sign changes of a product}        

\author{Soufiane Mezroui}


\institute{Soufiane Mezroui \at
           LabTIC,\\
           SIC Department,\\
           ENSAT,\\
           Abdelmalek Essaadi University,\\
           Tangier, Morocco\\
           \email{mezroui.soufiane@yahoo.fr}
           }

\maketitle

\begin{abstract}
Let $f\in S_k(\Gamma_{0}(N))$ be a normalized Hecke eigenform of even integral weight $k$ and level $N$. Let $j\ge1$ be a positive integer. We prove that for almost all primes $p$, $p\nmid N$, and for all characters $\chi_{0}=\pm 1\pmod N$, the sequence $\left(\chi_{0}(p^{nj})a(p^{nj})\right)_{n\in\N}$ has infinitely many sign changes.
We also obtain a similar result for the sequence $\left(a(p^{j(1+2n)})\right)_{n\in\N}$ when $j$ is odd. 
\keywords{Sign change \and Fourier coefficients\and Cusp forms\and Dirichlet series}
\subclass{11F03\and 11F30\and 11F37}
\end{abstract}

\section{Introduction}

Let $k,N\in\N$ be integers. Throughout the paper, $S_k(\Gamma_{0}(N),\chi)$ denotes the space of cusp forms of weight $k$ and level $N$, with Dirichlet character $\chi\pmod N$. When $k$ is even and $\chi=1$, the trivial character modulo $N$, we denote $S_k(\Gamma_{0}(N),1)=S_k(\Gamma_{0}(N))$. If in addition $N = 1$, we abbreviate notation with $S_k$. 

In \cite{meher}, it has been shown that for every normalized Hecke eigenform $f$ of even integral weight $k$ on the modular group $SL_2(\mathbb{Z})$ with Fourier coefficients $a(n)$ ($n \geq 1$), each sequence $(a(n^{j}))_{n\geq 1}$ for $j\in\{2, 3, 4\}$ has infinitely many sign changes. The proof of this uses suitable estimates of the sums
$$
\sum_{n\leq x}\lambda(n^j)\text{ and }\sum_{n\leq x}\lambda^2(n^j),
$$
where $\lambda(n)$ is given by $\lambda(n)=\frac{a(n)}{n^{(k-1)/2}}$. 

Recently, Kohnen and Martin showed, in \cite{kohnen14}, that if $j$ is a positive integer then for almost all primes $p$ the sequence $(a(p^{jn}))_{n\geq 0}$ has infinitely many sign changes. The proof requires the use of Landau's theorem and suitable computations applied to the Dirichlet series
$$
\sum_{n\geq 0}a(p^{j n})p^{-jns}.
$$
In this work we extend the results of \cite{kohnen14} to normalized Hecke eigenforms of even integral weight $k$ and level $N$. Furthermore, we will show that the sequence $(a(p^{j(1+2n)}))_{n\geq 0}$ has infinitely many sign changes. More precisely, our first main theorem is the following.

\begin{thm}\thlabel{thm2}
Let $f\in S_k(\Gamma_{0}(N),\chi)$ be a normalized Hecke eigenform of even integral weight $k$ and level $N$, with Dirichlet character $\chi$. Let $\chi_{0}\pmod N$  be a Dirichlet character satisfying $\chi_{0}^{2}=\chi$. Let
$$
f(z)=\sum_{n\ge 1}a(n)e(nz),
$$
be the Fourier expansion of $f$ at $\infty$. Let $j\ge1$ be an integer. Then for almost all primes $p$, $p\nmid N$, the sequence $\left(\frac{a(p^{nj})}{\chi_{0}(p^{nj})}\right)_{n\in\N}$ has infinitely many sign changes.
\end{thm}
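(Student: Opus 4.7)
The strategy is to attach a Dirichlet series to the sequence and apply Landau's theorem, extending the approach of \cite{kohnen14}. For $p\nmid N$, the Hecke polynomial at $p$ has Satake roots $\alpha_p,\beta_p$ with $\alpha_p+\beta_p=a(p)$ and $\alpha_p\beta_p=\chi(p)p^{k-1}$, and by Deligne $|\alpha_p|=|\beta_p|=p^{(k-1)/2}$. Using $\chi_0^2=\chi$, the pair $\alpha_p/\chi_0(p)$ and $\beta_p/\chi_0(p)$ has product $p^{k-1}$ (real, positive) and common modulus $p^{(k-1)/2}$, hence they are complex conjugates; write them as $p^{(k-1)/2}e^{\pm i\theta_p}$ for some $\theta_p\in[0,\pi]$. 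From $a(p^n)=(\alpha_p^{n+1}-\beta_p^{n+1})/(\alpha_p-\beta_p)$ one then obtains
$$\frac{a(p^{n})}{\chi_0(p^{n})}=p^{n(k-1)/2}\,\frac{\sin((n+1)\theta_p)}{\sin\theta_p}\in\R,$$
so the notion of sign change is well-defined.

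Next I form
$$D(s)=\sum_{n\ge 0}\frac{a(p^{nj})}{\chi_0(p^{nj})}\,p^{-njs},$$
absolutely convergent for $\mathrm{Re}(s)>(k-1)/2$. Summing two geometric series yields the meromorphic continuation
$$D(s)=\frac{1}{\alpha_p-\beta_p}\left(\frac{\alpha_p}{1-(\alpha_p/\chi_0(p))^{j}\,p^{-js}}-\frac{\beta_p}{1-(\beta_p/\chi_0(p))^{j}\,p^{-js}}\right)$$
to all of $\C$, with all poles on the line $\mathrm{Re}(s)=(k-1)/2$. A pole at the real point $s=(k-1)/2$ exists if and only if $e^{ij\theta_p}=1$, equivalently $j\theta_p\in 2\pi\Z$.

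Now suppose for contradiction that $(a(p^{nj})/\chi_0(p^{nj}))_{n\in\N}$ has only finitely many sign changes. After replacing $f$ by $-f$ if needed and removing a finite initial segment, the tail has non-negative coefficients, so Landau's theorem forces the abscissa of convergence $\sigma_c$ of $D$ to be a real singularity of $D$. The Chebyshev-type expression of the first paragraph shows $\sigma_c=(k-1)/2$ whenever the coefficients do not decay geometrically, which is the case outside a finite set of exceptional values of $\theta_p$. Hence $s=(k-1)/2$ must be a pole of $D$, forcing $j\theta_p\in 2\pi\Z$.

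It therefore remains to show that the condition $j\theta_p\in 2\pi\Z$, equivalently $\cos\theta_p\in\{\cos(2\pi m/j):m\in\Z\}$, holds only for a set of primes $p\nmid N$ of natural density zero. This is the main obstacle. For non-CM forms it follows from Sato--Tate: the angles $\theta_p$ equidistribute with respect to a continuous measure on $[0,\pi]$, which assigns measure zero to any finite set. For CM forms one uses the explicit description of $\alpha_p$ and $\beta_p$ in terms of the associated Hecke character to reach the same conclusion. These two ingredients together yield the ``almost all primes'' conclusion; the Dirichlet-series/Landau step itself is essentially formal once the pole analysis is in place.
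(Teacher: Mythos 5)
Your overall architecture --- realness of the terms, meromorphic continuation of the Dirichlet series, Landau's dichotomy, and reduction to a condition on the angle $\theta_p$ --- is the same as the paper's. The genuine gap is exactly at the step you flag as ``the main obstacle''. The paper handles it elementarily: a real pole forces $a(p)=\pm p^{(k-1)/2}(\zeta^{\mu}+\zeta^{-\mu})\chi_{0}(p)$ with $\zeta=e^{2\pi i/j}$, and since $k$ is even this places $\sqrt{p}$ in the fixed number field $\mathbb{K}$ generated by the $a(p)$, $\zeta$ and the values of $\chi_0$; because $\Q(\sqrt{p_1},\sqrt{p_2},\dots)$ has infinite degree over $\Q$, only finitely many primes can be exceptional. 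That yields ``all but finitely many primes'', which is what the paper (following Kohnen--Martin) means by ``almost all''. Your substitute, Sato--Tate equidistribution, is a vastly deeper input, delivers only a density-zero exceptional set (strictly weaker), and --- decisively --- is merely asserted in the CM case. That assertion cannot be repaired: for a CM eigenform the inert primes (density $1/2$) have $a(p)=0$, i.e.\ $\theta_p=\pi/2$, so whenever $4\mid j$ one has $j\theta_p\in 2\pi\Z$ for a positive density of primes; for such $p$ a direct computation gives $a(p^{jn})/\chi_0(p^{jn})=p^{jn(k-1)/2}>0$ for every $n$, so the sequence really has no sign changes. Your reduction to ``$j\theta_p\in2\pi\Z$ fails for almost all $p$'' is therefore false as stated in the CM setting, and no choice of wording about ``the explicit description via the Hecke character'' will rescue it. (The paper's own deduction of $\sqrt p\in\mathbb K$ silently assumes $\zeta^{\mu}+\zeta^{-\mu}\neq0$, so it stumbles on the same $a(p)=0$ case; but the field-degree argument is the intended, and much lighter, route, and it disposes of every prime with $a(p)\neq0$.)

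Two smaller issues. Your claim that $\sigma_c=(k-1)/2$ ``outside a finite set of exceptional values of $\theta_p$'' is left unproved; what is needed is that $|\sin((nj+1)\theta_p)|$ is bounded below along a subsequence of $n$, which in fact holds for every $\theta_p\in(0,\pi)$, while the degenerate case $\alpha_p=\beta_p$ must be treated separately (it forces $a(p)=\pm2\chi_0(p)p^{(k-1)/2}$, hence again $\sqrt p\in\mathbb K$ and finitely many $p$). The paper avoids this entirely by excluding the ``entire function'' branch of Landau from the closed rational form of the series, whose non-constant denominator necessarily has zeros. Finally, even in the non-CM case where Sato--Tate applies, you should be explicit that you are proving a density-one statement rather than the cofinite one the paper obtains.
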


This result extends \cite[Theorem 2.1]{kohnen14}. Indeed, when $\chi=1$, we get the following result. 

\begin{cor} 
Let $f\in S_k(\Gamma_{0}(N))$ be a normalized Hecke eigenform of even integral weight $k$ and level $N$. Let $j\ge1$ be a positive integer. Then for almost all primes $p$, $p\nmid N$, and for all characters $\chi_{0}=\pm 1\pmod N$, the sequence $\left(\chi_{0}(p^{nj})a(p^{nj})\right)_{n\in\N}$ has infinitely many sign changes.
\end{cor}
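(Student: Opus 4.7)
The plan is to derive the corollary as an immediate specialization of Theorem \thref{thm2} to the case of trivial nebentype. Since $f\in S_{k}(\Gamma_{0}(N))=S_{k}(\Gamma_{0}(N),1)$, we are in the situation $\chi=1$, and the hypothesis $\chi_{0}^{2}=\chi$ of Theorem \thref{thm2} reduces to $\chi_{0}^{2}=1$. The Dirichlet characters modulo $N$ satisfying $\chi_{0}^{2}=1$ are exactly the real (quadratic) characters, whose nonzero values lie in $\{-1,+1\}$; this is precisely the meaning of ``$\chi_{0}=\pm 1\pmod N$'' in the statement of the corollary.

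Next I would remove the quotient in Theorem \thref{thm2} in favor of a product. Since we restrict to primes $p\nmid N$, the value $\chi_{0}(p)$ lies in $\{-1,+1\}$, so $\chi_{0}(p^{nj})\in\{-1,+1\}$ for every $n\in\N$. These values are self-inverse, hence
$$
\frac{a(p^{nj})}{\chi_{0}(p^{nj})}=\chi_{0}(p^{nj})\,a(p^{nj})
$$
holds identically in $n$. Therefore the sequence appearing in the corollary coincides term-by-term with the sequence handled in Theorem \thref{thm2}, and the ``infinitely many sign changes'' property is inherited verbatim.

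The only point requiring a brief comment is the simultaneous quantifier ``for all characters $\chi_{0}=\pm 1\pmod N$''. Theorem \thref{thm2} produces, for each such $\chi_{0}$ separately, an exceptional set of primes of density zero. Because there are only finitely many real Dirichlet characters modulo $N$, the union of these finitely many exceptional sets is still a set of primes of density zero, so a single ``almost all primes $p$'' statement works uniformly in $\chi_{0}$. Apart from this elementary bookkeeping, no further argument is needed; the mathematical content of the corollary is entirely contained in Theorem \thref{thm2}, so there is no genuine obstacle in the derivation.
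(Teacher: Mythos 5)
Your proposal is correct and matches the paper's (implicit) derivation: the corollary is exactly Theorem~\thref{thm2} specialized to $\chi=1$, with the observation that $\chi_{0}(p^{nj})\in\{\pm 1\}$ turns the quotient into a product. Your extra remark that the finitely many real characters mod $N$ contribute only a finite union of exceptional prime sets is a worthwhile piece of bookkeeping that the paper leaves unstated.
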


Our second main theorem shows that the subsequence of $\left(a(p^{j(n)})\right)_{n\in\N}$, with odd indices, has infinitely many sign changes.

\begin{thm}\thlabel{thm3}
Let $f\in S_k(\Gamma_{0}(N))$ be a normalized Hecke eigenform of even integral weight $k$ and level $N$. Let $j\ge1$ be a positive integer such that $2\nmid j$. Then for almost all primes $p$, $p\nmid N$, the sequence $\left(a(p^{j(1+2n)})\right)_{n\in\N}$ has infinitely many sign changes.
\end{thm}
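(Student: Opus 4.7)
The plan is to follow the approach of \thref{thm2} (extending Kohnen--Martin \cite{kohnen14}) by combining a closed-form rational expression for the relevant generating function with Landau's theorem on Dirichlet series having non-negative coefficients, and to isolate the odd indices via an additional parity filter. Fix a prime $p\nmid N$; let $\alpha_p,\beta_p$ be the Satake parameters at $p$, so $\alpha_p\beta_p=p^{k-1}$ and $|\alpha_p|=|\beta_p|=p^{(k-1)/2}$ by Deligne, and write $\alpha_p=p^{(k-1)/2}e^{i\theta_p}$ with $\theta_p\in[0,\pi]$. The Hecke relation gives
\[
\sum_{m\ge 0}a(p^m)X^m=\frac{1}{(1-\alpha_pX)(1-\beta_pX)},
\]
and with $u=e^{2\pi i/j}$ the $j$-th root of unity filter already used in the proof of \thref{thm2} produces a rational expression for $\sum_{n\ge 0}a(p^{jn})X^{jn}$. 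Since $j$ is odd, $(-X)^{jn}=(-1)^nX^{jn}$, so the parity filter $\tfrac12(F(X)-F(-X))$ applied to that expression extracts the odd $n$ and yields
\[
2\sum_{n\ge 0}a(p^{j(1+2n)})X^{j(1+2n)}=\frac{1}{j}\sum_{r=0}^{j-1}\left[\frac{1}{(1-\alpha_pu^rX)(1-\beta_pu^rX)}-\frac{1}{(1+\alpha_pu^rX)(1+\beta_pu^rX)}\right].
\]

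Next, setting $X=p^{-s}$ realizes the Dirichlet series
\[
D(s):=\sum_{n\ge 0}a(p^{j(1+2n)})p^{-j(1+2n)s}
\]
as a rational function of $p^{-s}$, hence as a meromorphic function on $\C$. Its candidate poles occur at the $s$ with $p^s\in\{\pm\alpha_pu^r,\pm\beta_pu^r:0\le r\le j-1\}$, and $|\alpha_p|=|\beta_p|=p^{(k-1)/2}$ forces each one to satisfy $\operatorname{Re}(s)=(k-1)/2$. A pole is \emph{real} only when $\pm\alpha_pu^r$ (equivalently $\pm\beta_pu^r$) is positive real, i.e., $e^{i(\theta_p+2\pi r/j)}=\pm 1$ for some $r$, which pins $\theta_p$ to the finite set $\tfrac{\pi}{j}\Z\cap[0,\pi]$. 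For almost all $p$ the angle $\theta_p$ avoids this finite set (by the same density-zero input on the distribution of $\theta_p$ used in \thref{thm2}), so $D(s)$ is analytic on the entire real axis.

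The final step is Landau's theorem. Suppose for contradiction that $a(p^{j(1+2n)})$ is of constant sign for all $n\ge n_0$; after replacing $f$ by $-f$ if necessary and subtracting a polynomial in $p^{-s}$ from $D(s)$, one obtains a Dirichlet series with non-negative coefficients sharing the meromorphic continuation of $D$. By Deligne, its abscissa of convergence is bounded above by $(k-1)/2$; Landau then forces this abscissa to be a real singularity, contradicting the analyticity of $D$ on $\R$ established above. The degenerate case in which $a(p^{j(1+2n)})=0$ for all large $n$ is excluded by writing $a(p^m)=p^{m(k-1)/2}\sin((m+1)\theta_p)/\sin\theta_p$ and observing that infinitely many vanishings along the arithmetic progression $m=j(1+2n)$ force $\theta_p\in\Q\pi$, another density-zero condition. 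The main obstacle, just as for \thref{thm2}, is the density-zero claim for the exceptional set $\{p:\theta_p\in\tfrac{\pi}{j}\Z\}$; this is the single arithmetic input that absorbs the phrase ``almost all primes~$p$''.
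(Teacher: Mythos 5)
Your generating-function identity and your treatment of the first branch of Landau's theorem (a real pole on the line of convergence) are essentially the paper's argument: the root-of-unity filter composed with the parity filter gives the same rational function, a real pole forces $\alpha_p\zeta^{\mu}\in\R$, hence $a(p)=\pm p^{(k-1)/2}(\zeta^{\mu}+\zeta^{-\mu})$, and this can happen for only finitely many $p$. One caveat even here: the input from \thref{thm2} is not a ``density-zero statement about the distribution of $\theta_p$'' (no equidistribution is used anywhere); it is the algebraic observation that, $k$ being even, such an identity puts $\sqrt{p}$ into the fixed number field $\mathbb{K}=K_f(\zeta)$, and a fixed number field contains $\sqrt{p}$ for only finitely many primes. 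Since you defer to the proof of \thref{thm2}, this is only a mislabeling in the first branch — but it becomes a real problem in the second branch.

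The genuine gap is in your exclusion of the case where the series converges for all $s$. You reduce it (implicitly using that an entire function equal to a rational function of $p^{-s}$ must be a polynomial in $p^{-s}$ — a step worth stating) to ``$a(p^{j(1+2n)})=0$ for all large $n$,'' and then dismiss this because it forces $\theta_p\in\Q\pi$, which you call ``another density-zero condition.'' That claim is unjustified: $\Q\pi\cap[0,\pi]$ is a countable dense set, Sato--Tate gives density zero only for each individual value, and density zero for the full set $\{p:\theta_p\in\Q\pi\}$ is not a known elementary fact — nothing in the paper or in \thref{thm2} provides it. The fix is easy and you should make it explicit: vanishing for \emph{all} large $n$ along the progression $m=j(1+2n)$ gives $\sin((m+1)\theta_p)=0$ at consecutive terms differing by $2j$, hence $2j\theta_p\in\pi\Z$, so $\theta_p$ lies in the \emph{finite} set $\tfrac{\pi}{2j}\Z$, and then the same $\sqrt{p}\in\mathbb{K}$ argument (with $\zeta$ replaced by a $4j$-th root of unity) discards all but finitely many $p$. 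The paper avoids this branch entirely by a cleaner device valid for \emph{every} prime: it writes $S_{1,j}(X)=\sum_n a(p^{jn})X^{jn}-\sum_n a(p^{2jn})X^{2jn}$ as a single rational function whose numerator has degree $4j$ and whose denominator has degree $6j$; since the numerator degree is strictly smaller, the reduced denominator is non-constant, the function has poles, and the series cannot be entire — no exceptional primes, no distributional input.
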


It should be noted that the proofs of these two theorems rely on Landau's theorem applied to the suitable Dirichlet series and Deligne's bound for the Fourier coefficients $a(n)$.

Let $f\in S_k(\Gamma_{0}(N),\chi)$ be a cusp form with Fourier coefficients $a(n)$, $n\geq 1$. Let $j\geq 1$  be any non-negative integer and $p$ a prime  number. In order to state the following theorem, we define the operator $T_j(p)$ acting on $S_k(\Gamma_{0}(N),\chi)$ by   

\begin{equation}
T_j(p)f(z)= \sum_{n\ge1}\left(a(p^{j}n)+p^{j(k-1)}\chi^{j}(p)a\left(\frac{n}{p^{j}}\right)\right)e(nz),\label{eq,4}
\end{equation}

with the convention $a(n/p^{j})=0$ if $p^j$ does not divides $n$. Notice that $T_0(p)=2$ and $T_1(p)=T(p)$ where $T(p)$ is the $p$-th classical Hecke operator. When $f\in S_k$, these operators are the same as those defined in \cite{kohnen14}, and it was shown in this case that the characteristic polynomial $P(T_{j}(p),X)$ of $T_{j}(p)$ on $S_k$ has rational coefficients.

\begin{thm}\thlabel{thm4}
Suppose that $P(T_{j}(p),X)$ is irreducible over $\mathbb{Q}$. Assume further that there are no different eigenvalues $\lambda_{1}$ and $\lambda_2$ of $T_{j}(p)$ such that $\lambda_1+\lambda_2=0$. Let $f\in S_k$ be a non zero cusp form of even integral weight $k$ with Fourier coefficients $a(n)$, $n\geq 1$. Let $j\ge1$ be a positive integer such that $2\nmid j$. Then for almost all primes $p$, $p\nmid N$, the sequence $\left(a(p^{j(1+2n)})\right)_{n\in\N}$ has infinitely many sign changes.
\end{thm}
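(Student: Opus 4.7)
\medskip
\noindent\textbf{Plan of proof.} My plan is to follow the Dirichlet-series/Landau strategy of \thref{thm3}, adapted to handle the fact that $f$ need not be a Hecke eigenform. First, decompose $f=\sum_{i=1}^{d}c_i f_i$ in the basis of normalized Hecke eigenforms of $S_k$; let $\alpha_i,\beta_i$ be the Satake parameters of $f_i$ at $p$ (so that $a_i(p^m)=(\alpha_i^{m+1}-\beta_i^{m+1})/(\alpha_i-\beta_i)$), and write $\mu_i(j)=\alpha_i^j+\beta_i^j$ for the eigenvalue of $T_j(p)$ on $f_i$. Suppose, for contradiction, that $(a(p^{j(1+2n)}))_{n\in\N}$ has only finitely many sign changes; replacing $f$ by $-f$ if necessary, we may assume $a(p^{j(1+2n)})\ge 0$ for all $n\ge n_0$.

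Consider the Dirichlet series
$$
D(s)=\sum_{n\ge 0} a(p^{j(1+2n)})\,p^{-j(1+2n)s},
$$
which converges absolutely in $\Re s>(k-1)/2$ by Deligne's bound. Setting $u=p^{-js}$ and summing the geometric series obtained from the formula for $a_i(p^m)$, a direct calculation expresses $D(s)$ as the rational function
$$
D(s)=u\sum_{i=1}^{d}c_i\,\frac{a_i(p^j)+p^{(j+1)(k-1)}u^2\,a_i(p^{j-2})}{\bigl(1-\mu_i(j)u+p^{j(k-1)}u^2\bigr)\bigl(1+\mu_i(j)u+p^{j(k-1)}u^2\bigr)}
$$
of $u$, all of whose poles lie on the circle $|u|=p^{-j(k-1)/2}$. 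By Landau's theorem, the abscissa of convergence of $D(s)$ is a real singularity, which combined with the convergence half-plane forces a pole at the positive real point $u_0=p^{-j(k-1)/2}$.

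Next, substituting $u=u_0$ reduces the quadratic denominator factors to $2\mp\mu_i(j)\,p^{-j(k-1)/2}$, so such a pole arises only when $\mu_i(j)=\pm 2\,p^{j(k-1)/2}$ for some $i$ with $c_i\neq 0$. Since $k$ is even and $j$ is odd, $p^{j(k-1)/2}$ is irrational and $\pm 2\,p^{j(k-1)/2}$ has minimal polynomial $X^2-4\,p^{j(k-1)}$ over $\Q$. The irreducibility assumption on $P(T_j(p),X)$ then forces $P(T_j(p),X)=X^2-4\,p^{j(k-1)}$, so the two eigenvalues of $T_j(p)$ are $\pm 2\,p^{j(k-1)/2}$; being distinct and summing to zero, they contradict the second hypothesis. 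Hence no such pole exists, contradicting Landau's theorem, which yields the desired sign changes.

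I expect the most delicate step to be the explicit closed form for $D(s)$, in particular handling the case $j=1$ where $a_i(p^{j-2})=a_i(p^{-1})=0$, and the verification that $D(s)\not\equiv 0$ so that Landau's theorem is not vacuous; the latter follows from the irreducibility hypothesis, which makes the $\mu_i(j)$ pairwise distinct and hence the rational contributions from the various $f_i$ linearly independent. The qualifier ``for almost all primes $p$'' then absorbs the finitely many exceptional primes where this preparatory step or the hypotheses themselves fail.
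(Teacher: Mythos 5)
Your treatment of the first branch of Landau's theorem is correct and takes a genuinely different route from the paper's. The paper follows Kohnen--Martin: it introduces the subspace $V_{p}\subseteq S_{k}$ of forms whose coefficients $b(p^{j(1+2n)})$ decay faster than any power, notes that $V_p$ is stable under the Hermitian operator $T_{j}(p)^{2}$ (not under $T_{j}(p)$ itself, which shifts the parity of the exponent), extracts an eigenform $f_0$ of $T_{j}(p)^{2}$, and uses the two hypotheses to force $f_0$ to be a genuine $T_{j}(p)$-eigenform, to which the explicit formula of \thref{thm3} is then applied; the real-pole branch is handled separately by the $\sqrt{p}\in\mathbb{K}$ argument, at the cost of finitely many primes. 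You instead kill the real pole directly: a pole at $u_{0}=p^{-j(k-1)/2}$ forces $\mu_{i}(j)=\pm 2p^{j(k-1)/2}$, which is irrational since $j(k-1)$ is odd, so irreducibility pins $P(T_{j}(p),X)=X^{2}-4p^{j(k-1)}$ and the two eigenvalues sum to zero, contradicting the second hypothesis. This uses both hypotheses in an essential (and different) way and needs no exceptional set of primes beyond those where the hypotheses fail.

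The gap is in the second branch of Landau's theorem. Showing that $u_0$ is not a pole does not by itself ``contradict Landau'': Landau leaves open the alternative that the abscissa of convergence is $-\infty$, i.e.\ that your rational function $D$ is a \emph{polynomial} in $u$, and $D\not\equiv 0$ is not the condition you need --- you need $D$ to have at least one pole. Your coprimality remark does reduce the question correctly: under the two hypotheses the denominators $Q_i(u)=(1-\mu_i(j)u+p^{j(k-1)}u^{2})(1+\mu_i(j)u+p^{j(k-1)}u^{2})$ are pairwise coprime (a common root would force $\mu_i(j)=\pm\mu_{i'}(j)$, excluded by separability of $P$, respectively by the second hypothesis), so $D$ polynomial forces each numerator $a_i(p^{j})+p^{(j+1)(k-1)}a_i(p^{j-2})u^{2}$ with $c_i\neq 0$ to vanish, i.e.\ $a_i(p^{j})=a_i(p^{j-2})=0$, i.e.\ $\alpha_i^{2}=\beta_i^{2}$. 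The branch $\alpha_i=\beta_i$ gives $\sqrt{p}\in K_{f_i}$ and is harmless for almost all $p$, but the branch $\alpha_i=-\beta_i$, i.e.\ $a_i(p)=0$, is excluded by neither hypothesis, and for odd $j$ it makes \emph{every} $a_i(p^{j(1+2n)})$ vanish; if this occurs for all $i$ in the support of $f$, the sequence is identically zero and the conclusion itself fails at that prime. You must therefore either bound the set of such primes (e.g.\ by Serre's density-zero theorem for $a_i(p)=0$, if ``almost all'' is read as density one) or supply an argument, in the spirit of the paper's $V_p$ step, showing that total vanishing cannot happen; as written, your proof closes neither door, and the closing sentence ``no such pole exists, contradicting Landau's theorem'' overstates what has been established.
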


Notice that when $j=1$ and $T_1(p)=T(p)$, the conjecture of Maeda says that $P(T(p),X)$ is irreducible over $\mathbb{Q}$. This conjecture is supported by some numerical results \cite{baba,Ahlgren,farmer}. 

\section{Proof of \texorpdfstring{\thref{thm2}}{Theorem 2}}

In this subsection, we prove \thref{thm2}. We begin with the following lemma.

\begin{lem}\thlabel{lem}
Let $p$ be a prime number and $j\ge1$ an integer. The following assertions hold.

\begin{enumerate}
\item $T_j(p)$ is a monic polynomial in $T(p)$ of degree $j$.
\item If $f\in S_k(\Gamma_{0}(N),\chi)$ is an eigenfunction of $T_j(p)$ with eigenvalue $\lambda_j(p)$, then

\begin{equation}
\sum_{n\ge0}\frac{a(p^{jn})}{\chi_{0}(p^{jn})}X^n=\dfrac{1}{1-\frac{\lambda_j(p)}{\chi_{0}(p^{j})}X+p^{j(k-1)}X^2}\label{eq.1}
\end{equation}
 where $a(n)$ denotes the $n$-th Fourier coefficient of $f$.
\end{enumerate}
\end{lem}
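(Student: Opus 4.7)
For part (1), the plan is to establish the three-term recurrence
$$T_{j+1}(p) = T(p)T_j(p) - \chi(p)p^{k-1}T_{j-1}(p)$$
by direct computation on Fourier expansions, and then conclude by induction on $j$. Writing $T_j(p)f = \sum_n b(n)e(nz)$ with $b(n) = a(p^j n) + p^{j(k-1)}\chi^j(p)a(n/p^j)$, applying $T(p)$ produces Fourier coefficients $b(pn) + \chi(p)p^{k-1}b(n/p)$. Expanding yields four terms: two of them reassemble, using the identity $\chi(p)p^{k-1}\cdot p^{j(k-1)}\chi^j(p)=p^{(j+1)(k-1)}\chi^{j+1}(p)$, into the Fourier coefficient of $T_{j+1}(p)f$ at $e(nz)$, while the remaining two combine into $\chi(p)p^{k-1}$ times the Fourier coefficient of $T_{j-1}(p)f$. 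Starting from $T_0(p) = 2$ and $T_1(p) = T(p)$, the recurrence shows by induction that $T_j(p)$ is a polynomial of degree $j$ in $T(p)$, and monicity is automatic because the correction $\chi(p)p^{k-1}T_{j-1}(p)$ has degree $j-1 < j+1$, so it cannot alter the leading term of $T(p)\cdot T_j(p)$.

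For part (2), I would compare Fourier coefficients on both sides of $T_j(p)f = \lambda_j(p)f$ to obtain
$$a(p^j n) + p^{j(k-1)}\chi^j(p)a(n/p^j) = \lambda_j(p)a(n).$$
Specialising to $n = p^{jm}$ with $m \ge 1$, and separately to $n=1$ to pin down initial values via the normalization $a(1)=1$, gives the linear recurrence
$$a(p^{j(m+1)}) = \lambda_j(p)a(p^{jm}) - p^{j(k-1)}\chi^j(p)a(p^{j(m-1)}),$$
with $a(1)=1$ and $a(p^j)=\lambda_j(p)$. To pass to the generating series in \eqref{eq.1}, set $c_m := a(p^{jm})/\chi_0(p^{jm}) = a(p^{jm})/\chi_0(p^j)^m$. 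Because $\chi_0^{2} = \chi$, one has $\chi_0(p^j)^2 = \chi(p^j) = \chi^j(p)$, so dividing the recurrence for $a(p^{jm})$ by $\chi_0(p^j)^{m+1}$ turns the coefficient $p^{j(k-1)}\chi^j(p)$ into the cleaner $p^{j(k-1)}$. Multiplying $\sum_{n\ge 0}c_n X^n$ by $1 - \frac{\lambda_j(p)}{\chi_0(p^j)}X + p^{j(k-1)}X^2$ then collapses to $1$, which is exactly \eqref{eq.1}.

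The principal obstacle is the bookkeeping in part (1): four distinct shifts of the argument of $a$ (by $p$ and by $p^{\pm j}$) have to be paired correctly with matching powers of $p^{k-1}$ and $\chi(p)$ in order to see the Fourier coefficients of $T_{j\pm 1}(p)f$ emerge from $T(p)T_j(p)f$. Once this computation is completed, the degree and monicity statements follow from the classical Chebyshev-type shape of the recurrence, and part (2) becomes a routine generating-function calculation whose only subtle point is using $\chi_0^2=\chi$ at the right moment to clear the character factor from the quadratic denominator.
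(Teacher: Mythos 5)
Your proposal is correct and follows essentially the same route as the paper: part (1) via the Chebyshev-type recurrence $T_{j+1}(p)=T(p)T_j(p)-\chi(p)p^{k-1}T_{j-1}(p)$ and induction, and part (2) by extracting the linear recurrence for $a(p^{jm})$ from the eigenvalue equation, using $\chi_0^2=\chi$ to clear the character from the quadratic coefficient, and noting $a(1)=1$, $a(p^j)=\lambda_j(p)$ to make the numerator collapse to $1$. The only cosmetic difference is that you multiply the series by the quadratic denominator while the paper splits off the first two terms and reindexes; these are the same computation.
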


\begin{proof}[Proof of \thref{lem}]
\begin{enumerate}
\item  We see easily from \eqref{eq,4} that for all $j\ge 1$ one has
$$
T_{j+1}(p)=T_j(p)T(p)-p^{k-1}\chi(p)T_{j-1}(p),
$$
hence the result follows by recurrence on $j$.
\item Let $n\in\N$. We have
\begin{equation}
a(p^{j(n+1)})=\lambda_j(p)a(p^{jn})-p^{j(k-1)}\chi^j(p)a(p^{j(n-1)}),\label{eq,99}
\end{equation}
for all $j\ge1$, which can be deduced from \eqref{eq,4}. Therefore,

\begin{align*}
S&=\sum_{n\geq 0}\frac{a(p^{jn})}{\chi_{0}(p^{jn})}X^{n}\\
&=a(1)+\frac{a(p^{j})}{\chi_{0}(p^{j})}X+\sum_{n\geq 0}\frac{a(p^{j(n+2)})}{\chi_{0}(p^{j(n+2)})}X^{n+2}\\ 
&=a(1)+\frac{a(p^{j})}{\chi_{0}(p^{j})}X+\sum_{n\geq 0}
\frac{\lambda_{j}(p)}{\chi_{0}(p^{j})}\frac{a(p^{j(n+1)})}{\chi_{0}(p^{j(n+1)})}X^{n+2}\\
&\hspace{0.3 cm }-\sum_{n\geq 0}p^{j(k-1)}\frac{\chi^{j}(p)}{\chi_{0}^{2j}(p)}\frac{a(p^{jn})}{\chi_{0}(p^{jn})}X^{n+2}.
\end{align*}

Since $\chi=\chi_{0}^{2}$, then
$$
S=a(1)+\frac{a(p^{j})}{\chi_{0}(p^{j})}X+\frac{\lambda _{j}(p)}{\chi_{0}(p^{j})}(S-a(1))X-p^{j(k-1)}S X^{2}.
$$
Hence
$$
S=\dfrac{a(1)+\left(\frac{a(p^j)-a(1)\lambda_{j}(p)}{\chi_{0}(p^j)}\right)X}{1-\frac{\lambda_j(p)}{\chi_{0}(p^{j})}X+p^{j(k-1)}X^2}\cdot
$$
Replacing $n=0$ in \eqref{eq,99}, we obtain $a(p^j)=a(1)\lambda_{j}(p)=\lambda_{j}(p)$. This proves the Lemma.
\end{enumerate}
\end{proof}

\begin{proof}[Proof of \thref{thm2}]
Let $f\in S_k(\Gamma_{0}(N),\chi)$ be a normalized Hecke eigenform of even integral weight $k$ and level $N$, with Dirichlet character $\chi$. Let $\chi_{0}\pmod N$  be a Dirichlet character such that $\chi_{0}^{2}=\chi$. Let $j$ be an integer. It is well known that $\forall n\in\mathbb{N}$, $a(n)=\chi(n)\overline{a(n)}=\chi_{0}^{2}(n)\overline{a(n)}$. Let $p$ be a prime, $ p\nmid N$. Then $\chi_{0}(p^{nj})\neq 0$ and the above equation implies $a(p^{nj})=\chi_{0}^{2}(p^{nj})\overline{a(p^{nj})}$. Hence 
$$
\frac{a(p^{nj})}{\chi_{0}(p^{nj})}=\frac{\overline{a(p^{nj})}}{\overline{\chi_{0}(p^{nj})}},
$$ 
from which we obtain $\frac{a(p^{nj})}{\chi_{0}(p^{nj})}\in\mathbb{R}$. Suppose that the sequence $\left(\frac{a(p^{jn})}{\chi_{0}(a(p^{jn}))}\right)_{n\geq 0}$ does not have infinitely many sign
changes.

Applying Landau's theorem, we deduce that the Dirichlet series

\begin{equation}
\sum_{n\ge0}\frac{a(p^{jn})}{\chi_{0}(p^{jn})}p^{-jns}\quad (\Re(s)\gg 1),\label{eq:2}
\end{equation}
either has a pole on the real point of its line of convergence or must converges for all $s\in\C$. We will disprove the both assertions when $p$ is large. 

We start by considering the first case. Since $f$ is a normalized Hecke eigenform, we have $a(p^{n})=a(p)a(p^{n-1})-\chi(p)p^{k-1}a(p^{n-1})$ for all integers $n\in\mathbb{N}$. Taking this and applying the similar computations of \thref{lem}, we get 

\begin{equation}
P(X)=\sum_{n\ge0}\frac{a(p^n)}{\chi_{0}(p^{n})}X^n=\dfrac{1}{1-\frac{a(p)}{\chi_{0}(p)}X+p^{k-1}X^2}\cdot\label{eq.2}
\end{equation} 
The denominator of the right-hand side of \eqref{eq.2} factorizes as

\begin{equation}
1-\frac{a(p)}{\chi_{0}(p)}X+p^{k-1}X^2=(1-\alpha_pX)(1-\beta_pX),\label{eq:4}
\end{equation}
where

\begin{equation}
\alpha_{p},\beta_{p}=\frac{\frac{a(p)}{\chi_{0}(p)}\pm \sqrt{\left(\frac{a(p)}{\chi_{0}(p)}\right)^{2}-4p^{k-1}}}{2}\cdot\label{eq:08}
\end{equation}
Applying Deligne's bound, $\left(\frac{a(p)}{\chi_{0}(p)}\right)^{2}=\mid a(p)\mid ^{2}\leq 4p^{k-1}$, since $\left(\frac{a(p)}{\chi_{0}(p)}\right)\in\mathbb{R}$. We deduce that $\alpha_p$ and $\beta_p$ are complex conjugates numbers $\beta_{p}=\overline{\alpha_{p}}$.

Let $\zeta:=e^{2\pi i/j}$ be a primitive $j$-th root of unity and let $\nu\in\Z$. The following orthogonality relation
\begin{displaymath}
\sum_{\mu=0}^{j-1}\zeta^{\mu\ell}=
\begin{cases}
j, & \text{if $\ell\equiv 0$ {\rm (mod j),}}\\
0, & \text{if $\ell\not\equiv 0$ {\rm (mod j),}}
\end{cases}
\end{displaymath}
implies
$$
\sum_{n\ge0}\frac{a(p^{jn})}{\chi_{0}(p^{jn})}X^{jn}=\frac{1}{j}\sum_{\mu=0}^{j-1}P(\zeta^{\mu}X)\cdot
$$
Replacing $X=p^{-s}$ ( $s\in\C$ ), we get

\begin{equation}
\sum_{n\ge0}\frac{a(p^{jn})}{\chi_{0}(p^{nj})}p^{-jns}=\frac{1}{j}\sum_{\mu=0}^{j-1}\dfrac{1}{(1-\zeta^{\mu}\alpha_p p^{-s})(1-\zeta^{\mu}\beta_p p^{-s})}\quad (\Re(s)\gg1)\cdot\label{eq:5}
\end{equation}
Notice that using \eqref{eq:5},  the Dirichlet series 
$$
\sum_{n\ge0}\frac{a(p^{jn})}{\chi_{0}(p^{nj})}p^{-jns}
$$ 
can be meromorphicaly extended to the whole complex plane $\mathbb{C}$.

Suppose now that one of the denominators on the right-hand side of \eqref{eq:5} has a real zero, for example $\alpha_{p}\zeta^{\mu}\in\R$. Then $\alpha_p\zeta^{\mu}=\nu\in\R$. This implies $\overline{\alpha_p}\zeta^{-\mu}=\nu$, and using \eqref{eq:08} we get $\nu^2=|\alpha_p|^2= p^{k-1}$. Therefore $\nu=\pm p^{(k-1)/2}$. It follows that
$$
a(p)=(\alpha_p+\beta_p)\chi_{0}(p)=\pm p^{(k-1)/2}(\zeta^{-\mu}+\zeta^{\mu})\chi_{0}(p).
$$
We get the same result if we start with the condition that $\beta_p\zeta^\mu$ is real.

Suppose, for the sake of contradiction, there are infinitely many primes $p$ for which there are integers $\mu_p\pmod j$
such that

\begin{equation}
a(p)=\pm p^{(k-1)/2}(\zeta^{-\mu_p}+\zeta^{\mu_p})\chi_{0}(p).\label{eq:7}
\end{equation}
It is well known that 
$$
K_f:=\Q(\{a(p)\}_p),
$$
the subfield of $\C$ generated by all $a(p)$, where $p$ runs on primes, is a number field. Particularly, it is a finite extension of $\Q$. Therefore, the field $K_{f}(\zeta)$ is also a finite extension of $\Q$. Let $\mathbb{K}$ denote the field obtained by adjoining all $\chi_{0}(n)$, $\forall n\in\mathbb{N}$, to the field $K_{f}(\zeta)$. The field $\mathbb{K}$ is also a number field and particularly, a finite extension of $\Q$. From \eqref{eq:7} and since $k$ is even, we see
$$
\sqrt{p}\in\mathbb{K}.
$$
By our hypothesis, we conclude that there are infinitely many primes $p_1<p_2<p_3\dots$ satisfying
$$
\Q(\sqrt{p_1},\sqrt{p_2},\sqrt{p_3},\ldots)\subset \mathbb{K}.
$$
However, it is a classical fact that the degree of the extension 
$$
\Q(\sqrt{p_1},\sqrt{p_2},\sqrt{p_3},\dots)/\Q
$$
is infinite, which gives our contradiction. Consequently, we have proved that, for almost all primes $p$, the right-hand side of \eqref{eq:5} has no real poles.  

It remains to exclude the second case of Landau's theorem. Suppose that for a prime $p$, the series \eqref{eq:2} converges everywhere, and particularly, it is an entire function in $s$. By (1) of \thref{lem} we see that $f$ is an eigenfunction of $T_j(p)$. Let $\lambda_j(p)$ be the corresponding eigenvalue, hence from (2) of \thref{lem} we get 
\begin{equation}
\sum_{n\ge0}\frac{a(p^{jn})}{\chi_{0}(p^{jn})}X^{jn}=\dfrac{1}{1-\frac{\lambda_j(p)}{\chi_{0}(p^{j})}X^{j}+p^{j(k-1)}X^{2j}}\cdot\label{eeqq1}
\end{equation}
The denominator on the right-hand side is a polynomial in $X^j$ of degree $2$, hence
it is non-constant and so has zeros. Setting $X=p^{-s}$ to obtain a contradiction.

\end{proof}

\section{Proof of \texorpdfstring{\thref{thm3}}{Theorem 3}}

Assume the hypothesis of \thref{thm2}. We want  to compute the following sum
$$
S_{1}(X)=\sum_{n=0}^{\infty}\frac{a(p^{1+2 n})}{\chi_{0}(p^{1+2 n})}X^{1+2 n},
$$
By the same reasoning as in \eqref{eq:5} we have

\begin{align}
S_{0}(X)&=\sum_{n\ge0}\frac{a(p^{2 n})}{\chi_{0}(p^{2 n})}X^{2 n}=\frac{1}{2}\sum_{\mu=0}^{1}\dfrac{1}{(1-(-1)^{\mu}\alpha_p X)(1-(-1)^{\mu}\beta_p X)}\\
&=\frac{1+\alpha_{p}\beta_{p}X^{2}}{\left(1-\alpha_{p}^{2}X^{2}\right)\left(1-\beta_{p}^{2}X^{2}\right)}\cdot\label{eq,45}
\end{align}

Since $S_{1}(X)=P(X)-S_{0}(X)$, we obtain

\begin{equation}
S_{1}(X)=\frac{(\alpha_{p}+\beta_{p})X}{(1-\alpha_{p}^{2}X^{2})(1-\beta_{p}^{2}X^{2})}\cdot\label{eq,42}
\end{equation}

Now, let $j\ge1$ be an integer. Let $S_{1,j}$ denote the following sum
\begin{equation}
S_{1,j}(X)=\sum_{n=0}^{\infty}\frac{a(p^{j(1+2 n)})}{\chi_{0}(p^{j(1+2 n)})}X^{j(1+2 n)}\cdot\label{eq,52}
\end{equation}

Assume further that the integer $j\geq 1$ satisfy $(j,2)=1$. Once again, let $\zeta:=e^{2\pi i/j}$ be a primitive $j$-th root of unity and let $\nu\in\Z$. The orthogonality relation
\begin{displaymath}
\sum_{\mu=0}^{j-1}\zeta^{\mu\nu}=
\begin{cases}
j, & \text{if $\nu\equiv 0$ {\rm (mod j),}}\\
0, & \text{if $\nu\not\equiv 0$ {\rm (mod j),}}
\end{cases}
\end{displaymath}
implies

\begin{equation}
S_{1,j}(X)=\frac{1}{j}\sum_{\mu =0}^{j-1}S_{1}(\zeta^{\mu}X).\label{eq,51}
\end{equation}

\begin{proof}[Proof of \thref{thm3}]

Assume the hypothesis of \thref{thm2} and take , $2\nmid j$, $\chi=1$, $\chi_{0}=1$. Replacing this in \eqref{eq,52} to obtain

\begin{equation}
S_{1,j}(X)=\sum_{n=0}^{\infty}a(p^{j(1+2 n)})X^{j(1+2 n)}=\frac{1}{j}\sum_{\mu =0}^{j-1}S_{1}(\zeta^{\mu}X),\label{eq,53}
\end{equation}

where

\begin{equation}
S_{1}(\zeta^{\mu} X)=\frac{(\alpha_{p}+\beta_{p})\zeta^{\mu}X}{(1-\alpha_{p}^{2}\zeta^{2\mu}X^{2})(1-\beta_{p}^{2}\zeta^{2\mu}X^{2})}\cdot
\end{equation}

Replacing $X=p^{-s}$ ( $s\in\C$ ), we obtain

\begin{equation}
\sum_{n=0}^{\infty}a(p^{j(1+2 n)})\frac{1}{p^{s j(1+2 n)}}=\frac{\alpha_{p}+\beta_{p}}{j p^{s}}\sum_{\mu =0}^{j-1}\frac{\zeta^{\mu}}{(1-\alpha_{p}^{2}\zeta^{2\mu}\frac{1}{p^{2 s}})(1-\beta_{p}^{2}\zeta^{2\mu}\frac{1}{p^{2 s}})}\label{eq,54}\cdot
\end{equation}

Using this formula, the Dirichlet series 
$$
\sum_{n=0}^{\infty}a(p^{j(1+2 n)})\frac{1}{p^{s j(1+2 n)}}
$$ 
can be meromorphicaly extended to the whole complex plane $\mathbb{C}$. Suppose that the sequence $(a(p^{j(1+2 n)}))_{n\in\mathbb{N}}$ does not have infinitely many sign changes for infinitely many primes $p$ and apply once again Landau's theorem.

Suppose now that one of the denominators on the right-hand side of \eqref{eq,54} has a real zero, for example $\alpha_{p}\zeta^{\mu}\in\R$. Then as in the proof of \thref{thm2} we find
$$
a(p)=(\alpha_p+\beta_p)\chi_{0}(p)=\pm p^{(k-1)/2}(\zeta^{-\mu}+\zeta^{\mu}).
$$
We repeat the procedure of \thref{thm2} to show that the right-hand side of \eqref{eq,54} has no real poles, and then the first case of Landau's theorem is excluded.   

It remains to exclude the second case of Landau's theorem. By \thref{lem}, we have  

\begin{align}
S_{1,j}(X)&=\sum_{n=0}^{\infty}a(p^{j n})X^{j n}-\sum_{n=0}^{\infty}a(p^{2 j n})X^{2jn}\\\label{fin}
&=\frac{p^{2 j (k-1)}X^{4j}-\left( a(p^{2 j})+p^{j(k-1)}\right) X^{2j}-a(p^{j})X^{j}}{\left(1+ a(p^{j})X^{j}+p^{j(k-1)}X^{2j}\right)\left(1-a(p^{2 j})X^{2j}+p^{2 j(k-1)}X^{4j}\right)}\cdot
\end{align}

The numerator on the right-hand side is a polynomial of degree $4j$ and the denominator is a non constant polynomial of degree $6j$, hence the denominator has zeros. Setting $X=p^{-s}$ to obtain a contradiction.

\end{proof}

\section{Proof of \texorpdfstring{\thref{thm4}}{Theorem 4}}

\begin{proof}

The proof is similar to the one of \cite[Theorem 2.2]{kohnen14}, it suffices to make the following change, the set $V_{p}\subseteq S_{k}$ is defined to be the set of all cusp forms $g$ whose Fourier coefficients $b(p^{j(1+2n)})$ satisfy $b(p^{j(1+2n)})\ll_{g,c}p^{jc(1+2n)}$ for all $n\geq 0$ and every $c\in\mathbb{R}$. The first part of the proof remains unchanged. Now, $V_p$ is stable under $T_{j}(p)^{2}$, then by the same argument, there is an eigenform $f_{0}\in V_p$ of $T_{j}(p)^{2}$ since this operator is Hermitian. 

From this and since $P(T_{j}(p),X)$ is irreducible, we deduce that there is $\lambda\neq 0$ such that $T_{j}(p)^{2}f_{0}=\lambda f_{0}$. We should note that $T_{j}(p)h_{1}=\sqrt{\lambda}h_{1}$ and $T_{j}(p)h_{2}=-\sqrt{\lambda}h_{2}$ where $h_1=\sqrt{\lambda}f_{0}+T_{j}(p)f_{0}$ and $h_2=-\sqrt{\lambda}f_{0}+T_{j}(p)f_{0}$. Then by our hypothesis, either $h_1=0$ or $h_2=0$. Suppose without loss of generality that $h_2=0$ and $T_{j}(p)f_{0}=\sqrt{\lambda}f_{0}$. We can now proceed as in the proof of \cite[Theorem 2.2]{kohnen14} to deduce that $f_0$ is an eigenfunction of all Hecke operators. Finally we apply \eqref{fin} to $f_0$ and the second case of Landau's theorem is excluded.
\end{proof}

\section{Sign changes of the sequence \texorpdfstring{$\left(\frac{a(p^{l+m_{p}n})}{\chi_{0}(p)^{l}}\right)_{n\in\N}$}{TEXT}}

Finally, by modifying the method above one can obtain the following result. 

\begin{thm}\thlabel{thm5}
Let $f\in S_k(\Gamma_{0}(N),\chi)$ be a normalized Hecke eigenform of even integral weight $k$ and level $N$, with Dirichlet character $\chi$. Let $\chi_{0}\pmod N$  be a Dirichlet character satisfying $\chi_{0}^{2}=\chi$. Let
$$
f(z)=\sum_{n\ge 1}a(n)e(nz),
$$
be the Fourier expansion of $f$ at $\infty$. Consider the primes $p$ for which the polynomial
$(\beta_{p}\alpha_{p}^{m_{p}}-\alpha_{p}\beta_{p}^{m_{p}})X^{m_{p}}+(\beta_{p}^{m_{p}}-\alpha_{p}^{m_{p}})X^{m_{p}-1}+(\alpha_{p}-\beta_{p})$ has no real zero, where $m_{p}$ is an integer satisfying $\chi_{0}(p)^{m_{p}}=1$. Then for almost all of those primes $p$, the sequence $\left(\frac{a(p^{l+m_{p}n})}{\chi_{0}(p)^{l}}\right)_{n\in\N}$ has infinitely many sign changes with $l$ runs through the integers satisfying $1\leq l\leq m_{p}-1$.
\end{thm}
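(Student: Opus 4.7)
My plan is to mimic the proofs of \thref{thm2} and \thref{thm3}, adapted to the arithmetic progression $n\equiv l\pmod{m_p}$. The first ingredient is reality: since $\chi_0(p)^{m_p}=1$ we have $\chi_0(p^{l+m_p n})=\chi_0(p)^l$ for every $n$, and the self-conjugacy identity $a(n)=\chi_0^2(n)\overline{a(n)}$ then yields $a(p^{l+m_p n})/\chi_0(p)^l\in\R$, exactly as in the opening lines of the proof of \thref{thm2}.

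Next, I would write the generating function in closed form. Starting from
\[
P(X)=\sum_{n\ge 0}\frac{a(p^n)}{\chi_0(p^n)}X^n=\frac{1}{(1-\alpha_p X)(1-\beta_p X)},
\]
the orthogonality relation for the primitive $m_p$-th root of unity $\zeta=e^{2\pi i/m_p}$, applied to the character $\mu\mapsto\zeta^{-\mu l}$, extracts the arithmetic progression
\[
\sum_{n\ge 0}\frac{a(p^{l+m_p n})}{\chi_0(p)^l}X^{l+m_p n}=\frac{1}{m_p}\sum_{\mu=0}^{m_p-1}\zeta^{-\mu l}P(\zeta^\mu X).
\]
A partial-fractions computation using $\alpha_p\beta_p=p^{k-1}$ puts this over the common denominator $(1-\alpha_p^{m_p}X^{m_p})(1-\beta_p^{m_p}X^{m_p})$, and substituting $X=p^{-s}$ yields a meromorphic continuation of the Dirichlet series $\sum_{n\ge 0}(a(p^{l+m_p n})/\chi_0(p)^l)\,p^{-(l+m_p n)s}$ to all of $\C$.

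Now assume, for contradiction, that this Dirichlet series has only finitely many sign changes, and apply Landau's theorem. The entire-function case is excluded exactly as at the end of the proof of \thref{thm2}: the denominator $(1-\alpha_p^{m_p}X^{m_p})(1-\beta_p^{m_p}X^{m_p})$ is a nonconstant polynomial in $X^{m_p}$ whose zeros cannot all be cancelled by the lower-degree numerator, so at least one zero survives to produce a pole after the substitution. The real-pole case requires the rational function in $X$ to have a positive real pole; a direct inspection shows that the only candidate is $X=p^{-(k-1)/2}$, and the residue there is governed precisely by the polynomial $Q(X)$ of the statement (once one factors $(1-\alpha_p X)(1-\beta_p X)$ out of the numerator and recognises the cofactor as the discriminating polynomial for that pole). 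By hypothesis $Q$ has no real zero, so no real pole survives, and Landau's first case is ruled out for the primes under consideration. The clause \emph{almost all primes} absorbs the finitely many primes for which a further accidental cancellation might reinstate a real pole; these exceptional primes would satisfy a relation
\[
a(p)=\pm p^{(k-1)/2}(\zeta^{-\nu}+\zeta^{\nu})\chi_0(p)
\]
for some $\nu$, which, by the classical infinitude of $[\Q(\sqrt{p_1},\sqrt{p_2},\ldots):\Q]$, can hold for only finitely many $p$.

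The step I expect to be most delicate is the algebraic identification in the third paragraph: verifying directly that the polynomial $Q(X)$ is precisely the factor controlling the uncancelled real pole of the rational function. Once that identification is in hand, both branches of Landau's theorem close in essentially the same way as in \thref{thm2} and \thref{thm3}.
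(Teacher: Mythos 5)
Your derivation of the generating series is a genuinely different route from the paper's. The paper obtains the closed form for $S_l$ by deriving the recursion $S_{l}=\frac{a(p)}{\chi_{0}(p)}S_{l-1}X-p^{k-1}X^{2}S_{l-2}$, positing $S_{l}=(a\alpha_{p}^{l}+b\beta_{p}^{l})X^{l}$, and solving a linear system for $a$ and $b$; the polynomial $Q$ of the statement is exactly the denominator $R(X)$ (cleared of fractions) of that solution, i.e.\ the explicit denominator in \eqref{eq,50}. Your twisted roots-of-unity filter $S_{l}(X)=\frac{1}{m_p}\sum_{\mu}\zeta^{-\mu l}P(\zeta^{\mu}X)$ is correct and reaches the same function with less algebra; it exhibits $S_l$ over the denominator $(1-\alpha_p^{m_p}X^{m_p})(1-\beta_p^{m_p}X^{m_p})$ with numerator of degree at most $2m_p-2$, which suffices for the entire-function branch of Landau's theorem.

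The gap is in your third paragraph. The assertion that the residue at $X=p^{-(k-1)/2}$ ``is governed precisely by $Q(X)$'' is never verified, and as stated the logic points the wrong way: if $Q$ controlled that residue, then $Q$ having no real zero would make the residue \emph{nonzero} and hence confirm the pole rather than remove it. In fact, in your representation the hypothesis on $Q$ plays no role: a pole of $\frac{1}{m_p}\sum_{\mu}\zeta^{-\mu l}P(\zeta^{\mu}X)$ can only occur at a pole of some summand, i.e.\ at $X=\zeta^{-\mu}\alpha_p^{-1}$ or $\zeta^{-\mu}\beta_p^{-1}$, and (since $|\alpha_p|=|\beta_p|=p^{(k-1)/2}$ by Deligne) such a point is real precisely when $\zeta^{\mu}\alpha_p\in\R$, which forces $a(p)=\pm p^{(k-1)/2}(\zeta^{\mu}+\zeta^{-\mu})\chi_{0}(p)$ and hence $\sqrt{p}\in\mathbb{K}$. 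That is exactly the field-degree argument you demote to handling ``finitely many exceptional primes''; it should instead carry the entire real-pole case, and the residue claim should be dropped. The paper, by contrast, genuinely needs the hypothesis on $Q$, because in its formula \eqref{eq,50} the real zeros of $Q$ are an additional potential source of real poles, separate from the poles of $P$ and $S_{0}$ occurring in the numerator; your representation shows those zeros are removable, so your argument, once repaired as above, actually proves the theorem without that hypothesis.
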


\begin{rmk}
Notice that for those sequences, $\left(Re(a(p^{l+m_{p}n}))\right)_{n\in\N}$ \\
(resp. $\left(Im(a(p^{l+m_{p}n}))\right)_{n\in\N}$) has infinitely many sign changes when $\chi_{0}(p)^{l}\neq \pm i$ (resp. $\chi_{0}(p)^{l}= \pm i$).
\end{rmk}

Before giving the proof we shall establish some needed formulas in the full generality. Assume the conditions of \thref{thm2}. Let $\omega:=e^{2\pi i/m}$ be a primitive $m-$th root of unity of order $m$ and $\chi_{0}(p)^{m}=1$. We want  to compute the following sum
$$
S_{l}=\sum_{n=0}^{\infty}\frac{a(p^{l+m n})}{\chi_{0}(p^{l+m n})}X^{l+m n},
$$
where $l$ is an integer satisfying $0\leq l\leq m-1$. By \eqref{eq,99}, we have
$$
a(p^{l+m n})=a(p)a(p^{l-1+m n})-p^{k-1}\chi(p)a(p^{l-2+m n}),
$$
this yields
\begin{equation}
S_{l}=\frac{a(p)}{\chi_{0}(p)}S_{l-1}X-p^{k-1}X^{2}S_{l-2}.\label{eq,41}
\end{equation}
On the other hand, we have
\begin{equation}
S_{0}+\dots +S_{m-1}=P=\sum_{n\ge0}\frac{a(p^{n})}{\chi_{0}(p^{n})}X^n=\dfrac{1}{1-\frac{a(p)}{\chi_{0}(p)}X+p^{k-1}X^2}\cdot\label{eq,42a}
\end{equation}

From \eqref{eq,41}, we get 
\begin{equation}
S_{l}=(a\alpha_{p}^{l}+b\beta_{p}^{l})X^{l},\label{eq,43}
\end{equation}
where $a$ and $b$ are terms depending upon $X$ which will be computed.  

By the same reasoning as in \eqref{eq:5} we have

\begin{equation}
S_{0}=\sum_{n\ge0}\frac{a(p^{m n})}{\chi_{0}(p^{m n})}X^{m n}=\frac{1}{m}\sum_{\mu=0}^{m-1}\dfrac{1}{(1-\omega^{\mu}\alpha_p X)(1-\omega ^{\mu}\beta_p X)}\cdot\label{eqq,1}
\end{equation}

Hence by \eqref{eq,43}, we have

\begin{equation}
a +b=S_{0}=\frac{1}{m}\sum_{\mu=0}^{m-1}\dfrac{1}{(1-\omega^{\mu}\alpha_p X)(1-\omega^{\mu}\beta_p X)}\cdot\label{eq,46}
\end{equation}

Combine now the equations \eqref{eq,42a} and \eqref{eq,43} to get

\begin{align}
S_{0}+\cdots +S_{m-1}&=a\sum_{l=0}^{m-1}\alpha_{p}^{l}X^{l}+b\sum_{l=0}^{m-1}\beta_{p}^{l}X^{l}\cdot\\
&=a \frac{\alpha _{p}^{m}X^{m}-1}{\alpha _{p}X-1}+b\frac{\beta_{p}^{m}X^{m}-1}{\beta_{p}X-1}\cdot\\
&=P.\label{eq,47}
\end{align}

From this and \eqref{eq,43} we obtain

\begin{equation}
R(X)a=P-S_{0}\frac{\beta_{p}^{m}X^{m}-1}{\beta_{p}X-1},\label{eq,48}
\end{equation}

\begin{equation}
R(X)b=S_{0}\frac{\alpha_{p}^{m}X^{m}-1}{\alpha_{p}X-1}-P,\label{eq,49}
\end{equation}

where 

\begin{gather*}
R(X)=\left(\frac{\alpha_{p}^{m}X^{m}-1}{\alpha_{p}X-1}\right)-\left(\frac{\beta_{p}^{m}X^{m}-1}{\beta_{p}X-1}\right).
\end{gather*}

Replacing this in \eqref{eq,43}, then

\begin{align}
S_{l}&=\frac{\alpha_{p}^{l}X^{l}\left(P-S_{0}\frac{\beta_{p}^{m}X^{m}-1}{\beta_{p}X-1}\right)+\beta_{p}^{l}X^{l}\left(S_{0}\frac{\alpha_{p}^{m}X^{m}-1}{\alpha_{p}X-1}-P\right)}{R(X)}\\
&=\frac{\alpha_{p}^{l}X^{l}\left(P(\beta_{p}X-1)-S_{0}(\beta_{p}^{m}X^{m}-1)\right)+\beta_{p}^{l}X^{l}\left(S_{0}(\alpha_{p}^{m}X^{m}-1)-P(\alpha_{p}X-1)\right)}{(\beta_{p}\alpha_{p}^{m}-\alpha_{p}\beta_{p}^{m})X^{m}+(\beta_{p}^{m}-\alpha_{p}^{m})X^{m-1}+(\alpha_{p}-\beta_{p})}\cdot\label{eq,50}
\end{align}

Notice that using \eqref{eq,50}, the Dirichlet series 
$$
\sum_{n=0}^{\infty}\frac{a(p^{l+m n})}{\chi_{0}(p^{l+m n})}p^{-s(l+m n)}
$$ 
can be meromorphicaly extended to the whole complex plane $\mathbb{C}$.

\begin{proof}[Proof of \thref{thm5}]

Suppose that the sequence $\left(\frac{a(p^{l+m n})}{\chi_{0}(p^{l+m n})}\right)_{n\geq 0}$ does not have infinitely many sign
changes.

Applying Landau's theorem, we deduce that the Dirichlet series
\begin{equation}
\sum_{n=0}^{\infty}\frac{a(p^{l+m n})}{\chi_{0}(p^{l+m n})}p^{-s(l+m n)}\quad (\Re(s)\gg 1),\label{eq:2a}
\end{equation}
either has a pole on the real point of its line of convergence or must converges for all $s\in\C$. We start by considering the first case.

Since the denominator of \eqref{eq,50} has no real pole by hypothesis, then either the denominator of $P(X)$ or one of the denominators of \eqref{eqq,1} has real zero. We deduce that in all cases $\sqrt{p}\in\mathbb{K}$. The contradiction is obtained by the same way as above. Consequently, for almost all primes $p$ satisfying the hypothesis, the right-hand side of \eqref{eq:2a} has no real poles. We exclude the second case of Landau's theorem by using the both equations \eqref{eq,50} and \eqref{eeqq1}.

\end{proof}

\bibliographystyle{spmpsci}
\bibliography{mybibfile}

\end{document}